\newtheorem{thm}{Theorem} 
\newtheorem{lem}[thm]{Lemma}
\newtheorem{cor}[thm]{Corollary}
\theoremstyle{definition}
\newtheorem{defn}[thm]{Definition}
\newcommand{\Ne}{\mathbb N}
\newcommand{\NN}{\mathcal N}
\renewcommand{\epsilon}{\varepsilon}
\renewcommand{\phi}{\varphi}
\newcommand{\norm}[1]{\left\lVert#1\right\rVert}
\newcommand{\abs}[1]{\left\lvert#1\right\rvert}
\newcommand{\setbuilder}[2]{\{#1\colon#2\}}
\newcommand{\inter}{\operatorname{int}}
\newcommand{\un}[1]{\frac{1}{\norm{#1}}#1}
\newcommand{\card}[1]{\left\lvert#1\right\rvert}
\title{Sphere-of-influence graphs in normed spaces}
\dedicatory{Dedicated to K\'aroly Bezdek and Egon Schulte on the occasion of their 60th birthdays}
\author[M. Nasz\'odi, J. Pach, K. J. Swanepoel]{M\'arton Nasz\'odi \and J\'anos Pach \and 
Konrad Swanepoel}
\address{Department of Geometry, Lorand E\"otv\"os University, Pazm\'any P\'eter S\'etany 1/C Budapest, Hungary 1117}\email{marton.naszodi@math.elte.hu}
\address{EPFL Lausanne and R\'enyi Institute, Budapest}\email{pach@cims.nyu.edu}
\address{Department of Mathematics, London School of Economics and Political Science, Houghton Street, London WC2A 2AE, United Kingdom}\email{k.swanepoel@lse.ac.uk}
\keywords{}\subjclass[2010]{}
\thanks{M\'arton Nasz\'odi acknowledges the support of the J\'anos Bolyai Research Scholarship of the Hungarian Academy of Sciences, and
the Hung.\ Nat.\ Sci.\ Found.\ (OTKA) grant PD104744. Part of this paper was written when Swanepoel visited EPFL in April 2015. Research by J\'anos Pach was supported in part by Swiss National Science Foundation grants 200020-144531 and 200020-162884.}
\begin{document}
\begin{abstract}
We show that any $k$-th closed sphere-of-influence graph in a $d$-dimensional normed space has a vertex of degree less than~$5^d k$, thus obtaining a common generalization of results of F\"uredi and Loeb (1994) and Guibas, Pach and Sharir (1994).
\end{abstract}
\maketitle


Toussaint \cite{T88} introduced the sphere-of-influence graph of a finite set of points in Euclidean space for applications in pattern analysis and image processing (see \cite{T2014} for a recent survey).
This notion was later generalized to so-called closed sphere-of-influence graphs \cite{HJLM93} and to $k$-th closed sphere-of-influence graphs \cite{KZ2004}.
Our setting will be a $d$-dimensional normed space $\NN$ with norm $\norm{\cdot}$.
We denote the ball with center $c\in\NN$ and radius $r$ by $B(c,r)$.
\begin{defn}
Let $k\in\Ne$ and let $\setbuilder{c_i}{i=1,\dots,m}$ be a family of points in the 
$d$-dimensional normed space $\NN$.
For each $i=\{1,\dots,m\}$, let $r_i^{(k)}$ be the smallest $r$ such that 
\[\setbuilder{j\in\Ne}{j\neq i, \norm{c_i-c_j}\leq r}\] has at least $k$ elements.
Define the \emph{$k$-th closed sphere-of-influence graph on 
$V=\setbuilder{c_i}{i=1,\dots,m}$} by joining $c_i$ and $c_j$ whenever 
$B(c_i,r_i^{(k)})\cap B(c_j,r_j^{(k)})\neq\emptyset$.
\end{defn}
F\"uredi and Loeb \cite{FL94} gave an upper bound for the minimum degree of any closed sphere-of-influence graph in $\NN$ in terms of a certain packing quantity of the space (see also \cites{MQ94, S94}.)
\begin{defn}
Let $\vartheta(\NN)$ denote the largest number of points in the ball 
$B(o,2)$ of the normed space $\NN$ such that any two points are at distance 
at least $1$, and one of the points is the origin~$o$.
\end{defn}
F\"uredi and Loeb \cite{FL94} showed that any closed sphere-of-influence graph in $\NN$ has a vertex of degree smaller than $\vartheta(\NN)\leq 5^d$.
(It is clear that $\vartheta(\NN)$ is bounded above by the number of balls of radius $1/2$ that can be packed into a ball of radius $5/2$, which is at most $5^d$ by volume considerations.)

Guibas, Pach and Sharir \cite{GPS94} showed that any $k$-th closed sphere-of-influence graph in $d$-dimensional Euclidean space has a vertex of degree at most $c^dk$.
In this note we show the following more precise result, valid for all norms, and generalizing the result of F\"uredi and Loeb \cite{FL94} mentioned above.
\begin{thm}\label{thm:kcsig}
Every $k$-th sphere-of-influence graph on at least two points in a normed space $\NN$ has at least two vertices of degree smaller than
$\vartheta(\NN)k\leq 5^d k$.
\end{thm}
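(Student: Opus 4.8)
The plan is to locate the low-degree vertices at geometric extremes, rather than at the ball of largest radius, which can have unbounded degree: if many tiny balls cluster on the sphere $\bd B(c_0,r_0^{(k)})$ of a single large ball, then $c_0$ is adjacent to all of them. Instead, I would fix a linear functional $v$ with dual norm~$1$ and let $x^{*}$ be a point of the union $U=\bigcup_i B(c_i,r_i^{(k)})$ maximizing $\langle v,\cdot\rangle$; such an $x^{*}$ lies on $\bd B(c_0,r_0^{(k)})$ for some index, and $c_0$ is the vertex I would bound. Running the same argument for $-v$ yields a second such vertex, distinct from the first whenever $V$ has at least two points, which is what accounts for the phrase ``at least two''. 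Writing $r_j:=r_j^{(k)}$ and translating so that $x^{*}=o$ and scaling so that $r_0=1$, the neighbours of $c_0$ all lie in a bounded region on one side of $x^{*}$.

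Two facts would drive the count. First, $c_j$ being a neighbour of $c_0$ means $B(c_0,r_0)\cap B(c_j,r_j)\neq\emptyset$, i.e.\ $\norm{c_0-c_j}\le r_0+r_j$. Second, the extremality of $x^{*}$ caps every ball in the direction $v$: since $B(c_j,r_j)\subseteq U$, one gets $\langle v,c_j\rangle+r_j\le\langle v,x^{*}\rangle$. I would also repeatedly use the defining minimality of the radii: for every point $c$, fewer than $k$ of the remaining points lie strictly inside its own ball, so at most $k-1$ do.

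From here the argument is a Besicovitch-type packing estimate. I would group the neighbours greedily, declaring a neighbour a \emph{representative} unless it lies strictly inside the ball of an already chosen representative (or inside $B(c_0,r_0)$). By the minimality fact, each chosen ball, including $B(c_0,r_0)$, swallows at most $k-1$ non-representative neighbours, so $\deg(c_0)\le\vartheta(\NN)\,k-1$ provided the number of representatives is at most $\vartheta(\NN)$. To bound the representatives by $\vartheta(\NN)$ I would transport each representative ball to the common scale of $B(c_0,r_0)$ and read off a point of $B(x^{*},2)=B(o,2)$: the cap condition $\langle v,c_j\rangle+r_j\le\langle v,x^{*}\rangle$ keeps these points on the correct side of $x^{*}$, the intersection condition keeps them within distance $2$ of $x^{*}$, and the fact that no centre lies inside another's ball forces pairwise distance at least $1$. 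Together with $x^{*}$ itself, these points realize a configuration counted by $\vartheta(\NN)$, giving $\vartheta(\NN)\le5^{d}$ and recovering Füredi--Loeb when $k=1$.

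The main obstacle is this last step: matching the packing of representatives exactly to $\vartheta(\NN)$, whose definition demands a \emph{uniform} separation by~$1$ inside a fixed ball $B(o,2)$, even though the representative balls may have wildly different radii. Small-radius representatives clustered near $x^{*}$ are easy, but a neighbour of large radius meets $B(c_0,r_0)$ from far below and must be rescaled before its separation can be compared to the others; I expect to handle this by projecting each representative radially from $x^{*}$ onto the boundary of a suitable common ball and checking that the projection neither drops a pairwise distance below~$1$ nor violates the cap. A secondary technical point is that $v$ need not single out a unique boundary point when $\norm{\cdot}$ is not smooth; I would resolve this by approximating $\norm{\cdot}$ by smooth norms and passing to the limit in the finite graph.
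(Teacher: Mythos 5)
Your counting skeleton is fine: greedy representatives, each ball of the configuration strictly containing at most $k-1$ points of $V$ other than its centre, hence $\deg(c_0)\le R+(R+1)(k-1)\le \vartheta(\NN)k-1$ once the number $R$ of representatives satisfies $R\le\vartheta(\NN)-1$. The fatal gap is the selection rule for $c_0$. Choosing the ball whose boundary attains the directional maximum of the union $U$ gives no lower bound on the radii of the neighbours of $c_0$ relative to $r_0^{(k)}$, and without such a bound the packing step is false. Concretely (say $k=1$): let $c_0=o$ with $r_0^{(1)}=1$, and place $M$ pairs of points near the south pole of $\bd B(c_0,1)$, the two members of each pair at distance $\epsilon$ from each other and distinct pairs at distance at least $3\epsilon$. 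Then each of these $2M$ points has radius $\epsilon$, every ball $B(c_j,\epsilon)$ meets $B(c_0,1)$, no centre lies in the interior of another ball, and the maximum of $\langle v,\cdot\rangle$ over $U$ is attained at the north pole of $\bd B(c_0,1)$, so your rule selects $c_0$; the cap condition $\langle v,c_j\rangle+r_j\le\langle v,x^*\rangle$ holds trivially for the tiny balls. All $2M$ points survive your greedy as representatives, yet $\deg(c_0)=2M$ is unbounded. The failure is exactly in the sentence claiming that ``no centre lies inside another's ball forces pairwise distance at least $1$'': that condition only forces distance at least the \emph{maximum of the two radii}, here $\epsilon$, and no radial projection or rescaling into $B(o,2)$ can push two points at mutual distance $\epsilon$ (both at distance about $2$ from $x^*$) to distance $1$ apart. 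A secondary defect: the directions $v$ and $-v$ can select the same ball (put the tiny pairs on the equator instead of the south pole), so the argument would not produce two distinct vertices either.

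The missing idea, which is how the paper proceeds, is to choose $c$ among the \emph{two points with smallest} $r_i^{(k)}$. After normalizing $r_c^{(k)}=1$, every pair of neighbours of $c$ then contains at least one point of radius $\ge 1$, and this is precisely the hypothesis $\max\{\lambda_1,\lambda_2\}\ge 1$ of the paper's Lemma~\ref{lem:satellite}: under it, the bow-and-arrow inequality (Lemma~\ref{cor:bowarrow}) shows that the radial retraction $\pi$ onto $B(o,2)$ keeps two centres, neither interior to the other's ball, at distance $\ge 1$, which is what feeds the definition of $\vartheta(\NN)$. Your configuration above violates exactly this hypothesis, which is why no transport can rescue the representatives. (The paper handles the multiplicity $k$ by $k$-colouring an auxiliary graph $H$ with $c_ic_j\in E(H)$ iff $\norm{c_i-c_j}<\max\{r_i^{(k)},r_j^{(k)}\}$ rather than by your greedy, but that part of your plan is workable; the choice of the low-degree vertex is where the proof genuinely breaks.)
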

\begin{cor}
A $k$-th sphere-of-influence graph on $n$ points in $\NN$ has at most $(\vartheta(\NN)k-1)n\leq (5^d k -1)n$ edges.
\end{cor}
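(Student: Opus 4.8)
The plan is to bound the degree of a single, carefully chosen vertex and then account for the second one. Let $v$ be a vertex of minimum radius; after rescaling the norm assume $r_v^{(k)}=1$ and translate so that $v=o$. Every vertex then has radius at least $1$, and since $r_v^{(k)}$ is a $k$-th nearest-neighbour radius, fewer than $k$ points of $V$ lie strictly inside $B(o,1)$; these account for at most $k-1$ neighbours of $v$ (the \emph{inner} neighbours), which I treat separately. Any other neighbour $c_j$ satisfies $\norm{c_j}\ge 1$ together with the defining inequality $\norm{c_j}\le r_v^{(k)}+r_j^{(k)}=1+r_j^{(k)}$, so that $r_j^{(k)}\ge\norm{c_j}-1$. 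To get two vertices rather than one I will run the same argument for a vertex of second-smallest radius; the estimates below only use that a neighbour's radius is at least that of the chosen centre, and the single possible exception (the minimum-radius vertex itself) can be absorbed because it lies far away in exactly the sense the estimates require.

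Next I fold the possibly distant \emph{outer} neighbours into $B(o,2)$ in order to apply $\vartheta(\NN)$. Let $\phi$ be the radial capping map, $\phi(x)=x$ for $\norm{x}\le 2$ and $\phi(x)=2x/\norm{x}$ otherwise. The crux, which already yields the F\"uredi--Loeb bound for $k=1$, is that $\phi$ does not destroy separation: using the reverse triangle inequality together with $r_j^{(k)}\ge\norm{c_j}-1$ one checks, case by case (both images capped, one capped, neither capped), that if $c_i,c_j$ are outer neighbours with $\norm{c_i-c_j}\ge\max(r_i^{(k)},r_j^{(k)})$ then $\norm{\phi(c_i)-\phi(c_j)}\ge 1$, while $\norm{\phi(c_j)}=\min(\norm{c_j},2)\ge 1$. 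For $k=1$ the nearest-neighbour property forces $\norm{c_i-c_j}\ge\max(r_i^{(k)},r_j^{(k)})$ for every pair, so $\{o\}\cup\phi(\{\text{outer neighbours}\})$ is a $1$-separated subset of $B(o,2)$ containing $o$, hence of size at most $\vartheta(\NN)$, and $v$ has at most $\vartheta(\NN)-1$ neighbours.

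For general $k$ the pairwise separation fails, and this is where the factor $k$ enters, in the spirit of \cite{GPS94}. The definition of $r_j^{(k)}$ now only guarantees that fewer than $k$ points lie strictly inside $B(c_j,r_j^{(k)})$. I will therefore select greedily a maximal set $P$ of outer neighbours whose images, together with $o$, are pairwise $1$-separated; by the definition of $\vartheta(\NN)$ this gives $\card{P}\le\vartheta(\NN)-1$. Maximality forces every outer neighbour to have its image within distance $<1$ of some $\phi(c_p)$ with $c_p\in P$, and the separation estimates of the previous paragraph, run in reverse, show that such a neighbour in fact lies strictly inside $B(c_p,r_p^{(k)})$; since that ball contains fewer than $k$ points other than $c_p$, each $c_p$ is responsible for at most $k$ outer neighbours. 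Thus there are at most $(\vartheta(\NN)-1)k$ outer neighbours, and adding the at most $k-1$ inner ones gives degree at most $\vartheta(\NN)k-1<\vartheta(\NN)k$, as required; the estimate $\vartheta(\NN)\le 5^d$ is already recorded above.

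The step I expect to be the main obstacle is making the folding map $\phi$ behave in an arbitrary norm: radial projection onto a ball is in general only $2$-Lipschitz, so the \emph{reverse} implication---that closeness of images forces genuine membership in $B(c_p,r_p^{(k)})$, so that the $k$-th nearest-neighbour count applies---must be obtained from the explicit reverse-triangle computations rather than from a soft Lipschitz argument, and the degenerate configurations (outer neighbours nearly collinear with $o$, or lying exactly on the sphere of radius $1$ or $2$) have to be checked by hand. The remaining bookkeeping---pinning down the constant so that the inequality is strict, and verifying the second vertex---is routine once this separation property of $\phi$ is established.
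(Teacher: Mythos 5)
You never actually prove the corollary: everything in your proposal is aimed at bounding the degree of one or two specially chosen vertices, which is Theorem~\ref{thm:kcsig}, and then you stop, writing ``as required.'' The corollary bounds the \emph{number of edges} by $(\vartheta(\NN)k-1)n$, and a degree bound for the minimum-radius vertex (or for two vertices) does not by itself give that. The standard remedy --- delete a small-degree vertex and recurse --- is exactly where sphere-of-influence graphs need care, because an induced subgraph of a $k$-th SIG on a subset of the points is \emph{not} the $k$-th SIG of that subset: the radii change. What rescues the induction is a monotonicity observation missing from your write-up: deleting a point can only (weakly) increase each remaining $r_j^{(k)}$, hence only enlarge the remaining balls, so every edge of the original graph between surviving points is still an edge of the $k$-th SIG $G'$ of the surviving $n-1$ points. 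Then $\card{E(G)}\leq \deg(v)+\card{E(G')}\leq(\vartheta(\NN)k-1)+(\vartheta(\NN)k-1)(n-1)$, and induction on $n$ finishes. Note this induction needs only \emph{one} vertex of degree at most $\vartheta(\NN)k-1$ in every SIG, so your extra effort on the second vertex buys nothing for the corollary; the two-vertex statement is simply the stronger form in which the paper casts Theorem~\ref{thm:kcsig}.

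Your degree bound itself also has a flaw, precisely at the point where you diverge from the paper. The paper partitions the neighbours into $k$ classes by greedily colouring the auxiliary graph $H$ in increasing order of radius (Lemma~\ref{lem:color}) and applies the folding lemma within each class; you instead pick a maximal $1$-separated set $P$ of images and want each $c_p\in P$ to account for at most $k$ neighbours. But the separation statement, read in contrapositive, only gives: if $\norm{\phi(c_j)-\phi(c_p)}<1$ then $\norm{c_j-c_p}<\max\{r_j^{(k)},r_p^{(k)}\}$. This puts $c_j$ strictly inside $B(c_p,r_p^{(k)})$ only when $r_p^{(k)}\geq r_j^{(k)}$; if $r_j^{(k)}>r_p^{(k)}$, you learn instead that $c_p\in\inter(B(c_j,r_j^{(k)}))$, and the ``fewer than $k$ points'' count now applies to the wrong ball --- the number of neighbours whose own balls swallow a fixed $c_p$ is a reverse-nearest-neighbour count that you have not bounded. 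The fix is to run your greedy selection in \emph{decreasing} order of radius: every rejected neighbour is then image-close to an already selected point of larger or equal radius, membership in that point's ball follows, and each cluster has size at most $k$ (at most $k-1$ points strictly inside, plus the centre itself); this is the same ordering trick that makes the paper's colouring argument work. Finally, a smaller point: in the case where both images are capped, the reverse triangle inequality alone does not suffice in a general norm; you need exactly the bow-and-arrow inequality (Lemma~\ref{cor:bowarrow}), with the smaller of the two norms in the denominator.
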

\begin{proof}[Proof of Theorem~\ref{thm:kcsig}]
Let $V=\{c_1,c_2,\dots,c_m\}$.
Relabel the vertices $c_1,c_2,\dots,c_m$ such that $r_1^{(k)}\leq r_2^{(k)}\leq\dots\leq r_m^{(k)}$.
We define an auxiliary graph $H$ on $V$ by joining $c_i$ and $c_j$ whenever 
$\norm{c_i-c_j}<\max\{r_i^{(k)},r_j^{(k)}\}$.
Thus, if $\setbuilder{c_i}{i\in I}$ is an independent set in $H$, then no ball in $\setbuilder{B(c_i,r_i^{(k)})}{i\in I}$ contains the center of another in its interior.
We next bound the chromatic number of $H$.
\begin{lem}\label{lem:color}
The chromatic number of $H$ does not exceed $k$.
\end{lem}
\begin{proof}
Note that for each $i\in\{1,\dots,m\}$, the set \[\setbuilder{j<i}{c_ic_j\in E(H)} = 
\setbuilder{j<i}{\norm{c_i-c_j}<r_i^{(k)}}\] has less than $k$ elements.
Therefore, we can greedily color $H$ in the order $c_1, 
c_2,\dots, c_m$ by $k$ colors.
\end{proof}
We next show that the degrees of $c_1$ and $c_2$ (corresponding to the two smallest $r_i^{(k)}$) are both at most $\vartheta(\NN)k$, which will complete the proof of Theorem~\ref{thm:kcsig}.
We first need the so-called ``bow-and-arrow'' inequality of \cite{FL94}.
\begin{lem}[F\"uredi--Loeb \cite{FL94}]\label{cor:bowarrow}
For any two non-zero elements $a$ and $b$ of a normed space,
\[ \norm{\un{a}-\un{b}}\geq\frac{\norm{a-b}-\abs{\norm{a}-\norm{b}}}{\norm{b}}.\]
\end{lem}

\begin{proof}
Without loss of generality, $\norm{a}\geq\norm{b}>0$.
Then
\begin{align*}
\norm{a-b} &=\norm{\norm{a}\un{a}-\norm{b}\un{b}}\\
&=\norm{\norm{b}(\un{a}-\un{b})+(\norm{a}-\norm{b})\un{a}}\\
&\leq \norm{b}\norm{\un{a}-\un{b}}+\norm{a}-\norm{b}. \qedhere
\end{align*}
\end{proof}

The next lemma is abstracted with minimal hypotheses from \cite{MQ94}*{Proof of Theorem~6} (see also 
\cite{FL94}*{Proof of Theorem~2.1}).
\begin{lem}\label{lem:satellite}
Consider the balls $B(v_1,\lambda_1)$ and $B(v_2,\lambda_2)$ in the normed space $\NN$, such that
$\max\{\lambda_1,\lambda_2\}\geq 1$, $v_1\notin \inter(B(v_2,\lambda_2))$, $v_2\notin \inter(B(v_1,\lambda_1))$ and $B(v_i,\lambda_i)\cap B(o,1)\neq\emptyset$ \textup{(}$i=1,2$\textup{)}.
Define $\pi\colon\NN\to B(o,2)$ by \[\pi(x)=\begin{cases} x &\text{if $\norm{x}\leq 2$,}\\ \frac{2}{\norm{x}}x & \text{if $\norm{x}\geq 2$.} \end{cases}\]
Then $\norm{\pi(v_1)-\pi(v_2)}\geq 1$.
\end{lem}
\begin{proof}
In terms of the norm, we are given that $\norm{v_1-v_2}\geq\max\{\lambda_1,\lambda_2\}\geq 1$, $\norm{v_1}\leq\lambda_1+1$, and $\norm{v_2}\leq\lambda_2+1$.
Without loss of generality, $\norm{v_2}\leq\norm{v_1}$.

If $v_1,v_2\in 2K$ then $\norm{\pi(v_1)-\pi(v_2)}=\norm{v_1-v_2}\geq 1$.

If $v_1\notin 2K$ and $v_2\in 2K$, then
\begin{align*}
\norm{\pi(v_1)-\pi(v_2)} &=\norm{2\un{v_1}-v_2} \geq \norm{v_1-v_2}-\norm{v_1-2\un{v_1}}\\
&= \norm{v_1-v_2}-(\norm{v_1}-2)\geq\lambda_1 - (\lambda_1+1) + 2 =1.
\end{align*}
If $v_1,v_2\notin 2K$, then
\begin{align*}
\norm{\pi(v_1)-\pi(v_2)} &=\norm{2\un{v_1}-2\un{v_2}} \geq 2\frac{\norm{v_1-v_2}-\norm{v_1}+\norm{v_2}}{\norm{v_2}} \quad\text{by  Lemma~\ref{cor:bowarrow}}\\
&\geq 2\left(\frac{\lambda_1-(\lambda_1 + 1)}{\norm{v_2}} + 1\right) = \frac{-2}{\norm{v_2}} + 2 \geq -1+2=1.\qedhere
\end{align*}
\end{proof}
We can now finish the proof of Theorem~\ref{thm:kcsig}.
Let $c\in\{c_1,c_2\}$ be the point with smallest or second-smallest $r_i^{(k)}$.
By Lemma~\ref{lem:color} we can partition the set of neighbors of $c$ in the $k$-th closed sphere-of-influence graph on $V$ into $k$ classes $N_1,\dots,N_k$ so that each $N_i$ is an independent set in $H$.
We may assume that the radius $r_i^{(k)}$ corresponding to $c$ is 1.
Then each ball in $\setbuilder{B(c_j,r_j^{(k)})}{c_j\in N_i}$ intersects $B(c,1)$, and the center of no ball is in the interior of another ball.
By Lemma~\ref{lem:satellite}, $\setbuilder{\pi(p-c)}{p\in N_i}$ is a set of points contained in $B(o,2)$ with a distance of at least $1$ between any two.
That is, $\card{N_i\setminus\inter(B(c,1))} \leq\vartheta(\NN)-1$ for each $i=1,\dots,k$.
Since there are at most $k-1$ points in $V\cap \inter(B(c,1))\setminus\{c\}$, 
it follows that the degree of $c$ is at most $\sum_{i=1}^k \card{N_i\setminus\inter(B(c,1))} + k-1 \leq(\vartheta(\NN)-1)k + k-1 = \vartheta(\NN)k-1$.
\end{proof}

\bibliographystyle{amsalpha}
\bibliography{biblio}

\end{document}